\newtheorem{thm}{Theorem}
\newtheorem{lem}[thm]{Lemma}
\newtheorem{rmk}[thm]{Remark}
\newtheorem{defi}[thm]{Definition}
\newtheorem{cor}[thm]{Corollary}
\DeclareMathOperator{\spann}{span}\DeclareMathOperator{\tr}{tri}
\DeclareMathOperator{\pr}{pr}
\DeclareMathOperator{\sgn}{sgn}
\newcommand{\C}{\mathbb{C}}
\newcommand{\dirac}{\mathcal{D}}
\newcommand{\im}{\,\mathtt{Im}\,}
\newcommand{\ci}{\mathtt{i}}
\newcommand{\Spinc}{\text{Spin}^{\C}}
\newcommand{\Peins}{{\mathbb{P}^1}}
\newcommand{\HQ}{\mathbb{H}}
\newcommand{\R}{{\mathbb{R}}}
\newcommand{\Z}{{\mathbb{Z}}}
\newcommand{\eps}{\varepsilon}
\newcommand{\tangent}{\text{T}}
\newcommand{\sep}{;\,}
\title{$\Spinc$ Dirac operators over the flat 3-torus}
\author{J. Fabian Meier \\ Universität Bonn}
\begin{document}

\maketitle




\begin{abstract}
We determine spectrum and eigenspaces of some families of $\Spinc$ Dirac operators over the flat 3-torus. Our method relies on projections onto appropriate 2-tori on which we use complex geometry.

Furthermore we investigate those families by means of spectral sections (in the sense of Melrose/Piazza). Our aim is to give a hands-on approach to this concept. First we calculate the relevant indices with the help of spectral flows. Then we define the concept of a \emph{system of infinitesimal spectral sections} which allows us to classify spectral sections for small parameters $R$ up to equivalence in $K$-theory. We undertake these classifications for the families of operators mentioned above.

Our aim is therefore twofold: On the one hand we want to understand the behaviour of $\Spinc$ Dirac operators over a 3-torus, especially for situations which are induced from a 4-manifold with boundary $T^3$. This has prospective applications in generalised Seiberg-Witten theory. On the other hand we want to make the term ``spectral section'', for which one normally only knows existence results, more concrete by giving a detailed description in a special situation. 
\end{abstract}

~\\[0.1ex] {\bf Keywords:}\\[0.5ex]
Spinc Dirac operator\sep 3-torus\sep spectral section \\[1ex]
{\bf Subject classification:} \\[0.5ex]
MSC[2010] 47A10\sep 58C40\sep 58J30





\section{Introduction}
\label{sec:intro}

In the study of smooth 4-manifolds, especially in the context of (generalised) Seiberg-Witten theory, it would be nice to understand $\Spinc$ Dirac operators which are induced on the boundary of a compact 4-manifold.

Manifolds with boundary $T^3$ where already studied in this context by \cite{swtorus}. But for generalized Seiberg-Witten theories, also families of operators in non-trivial $\Spinc$ structures become important. Therefore, we undertake a detailed study for some of these families. We now describe the object of investigation:

For every $\Spinc$ structure on $T^3= \R^3/\Z^3$ we analyse the family of Dirac operators given by connections $\nabla^K + \ci\alpha$; here $\nabla^K$ is a fixed background connection (to be constructed below) for an appropriate line bundle $K$ and $\alpha$ comes from the parameter space of closed one-forms.

Our first aim is to determine the spectrum and an orthogonal eigenbasis for these operators. Our strategy is as follows:
\begin{enumerate}
\item We write the 3-torus as $S^1$ bundle over a 2-torus (determined by the $\Spinc$ structure).
\item We equip the 2-torus with a complex structure and choose appropriate holomorphic line bundles.
\item We use complex geometry and methods from \cite{tejeroprieto}.
\item We combine the calculated terms with exponential functions to get the desired result.
\end{enumerate}

The calculations above will help us to access our second aim: The construction of spectral sections.

For a lattice $\ell \subset H^1(T^3;\Z) \subset H^1(T^3;\R)$ look at the family of operators parametrised by $B=\big(\ell \otimes \R\big)/\ell$. Since we know the concrete spectrum we can calculate all spectral flows in this torus which gives us direct access to the index in $K^1(B)$. By \cite[section 2]{melrosepiazza} the vanishing of this index corresponds to the existence of spectral sections.

For small parameters $R$ we give a classification of all spectral sections up to equivalence in $K$-theory.

\begin{rmk}
  If $\iota: T^3 \hookrightarrow M$ is the boundary of a $\Spinc$ 4-manifold $M$ and $\ell$ is chosen to be a subset of $\iota^*\big(H^1(M;\Z)\big)$, then one can show that our family of operators is a boundary family in the sense of \cite{melrosepiazza}; this guarantees the existence of spectral sections in this case but does not lead to concrete constructions of them.
\end{rmk}

\section{Definitions}
\label{sec:defini}

We take $T^3:= \R^3/\Z^3$ to be the flat 3-torus. We identify the first and second cohomology groups with each other by the Hodge star operation. Both of them will be identified with $\Z^3$ or $\R^3$ through the standard (positively oriented) basis $dx_1, dx_2, dx_3$ of $\tangent \R^3$.

The trivial Spin structure induces a $\Spinc$ structure with associated bundle $\underline \HQ = T^3 \times \HQ$. Here $\HQ= \spann\{e_0, e_1, e_2, e_3\}$ denotes the space of quaternions. It is considered as a complex vector space by left multiplication with $\ci = e_1$ and as a left-quaternionic vector space by inverse right multiplication.

Now the $\Spinc$ structures can be canonically identified with elements $\hat k \in H^2\big(T^3;\Z\big)$ (for a general explanation of $\Spinc$ structures and their associated bundles see e.g. \cite{morgan}). For every such element we choose a Hermitian line bundle $K$ with $c_1(K)= \hat k$ and a unitary background connection $\nabla^K$; possible choices and constructions will be detailed in the subsequent sections. Then the $\Spinc$ structure $\hat k$ has the associated bundle $\HQ \otimes K$.

For each $K$ and closed one-form $\alpha$ we get a $\Spinc$ Dirac operator
\begin{align*}
  \dirac^K_\alpha : \Gamma\big(\underline \HQ \otimes K\big) \to \big(\underline \HQ \otimes K\big)
\end{align*}
for the connection $\nabla^K+\ci \alpha$.

These operators will be analysed in the subsequent sections.

\section{Spectrum and Eigenbasis}
\label{sec:speceig}

We distinguish two main cases.

\subsection{Nontrivial $\Spinc$ structure}
\label{sec:specnon}

We write $\hat k = h \cdot k$ with $k\in \Z^3$ and maximal $h\in \Z^+$. Let $W$ be the plane in $\R^3$ orthogonal to $k$ and $\pi_k$ the orthogonal projection. By taking quotients we get a map $\pi_{\overline k} : T^3 \to T_\Lambda := W/\Lambda$ with $\Lambda= \pi_{k}(\Z^3)$.

Let $w_1, w_2$ be the basis of a fundamental parallelogram in $\Lambda$. We take $c^i \in [0,1)$, $i=1,2$, with $w_i - c^i\cdot k \in \Z^3$.

\begin{lem}
   The map $\pi_{\overline k} : T^3 \to T_\Lambda$ determines a trivial $\R/\Z$-bundle with trivialisation:
   \begin{align}\label{trivmap}
     \begin{array}{rcl}
      T^3 & \stackrel{\pi_{\overline k} \times \tr}{\longrightarrow} & T_\Lambda\times \R/\Z \\ 
     \bigg[\chi_1w_1 + \chi_2w_2 + \chi k \bigg] & \mapsto & \bigg( \big[\chi_1w_1 + \chi_2w_2\big], \big[c^1\chi_1 + c^2 \chi_2+\chi\big]\bigg)
     \end{array}
   \end{align}
 \end{lem}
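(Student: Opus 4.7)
The plan is to verify by direct computation that formula (\ref{trivmap}) descends to a well-defined map from $T^3 = \R^3/\Z^3$, is a diffeomorphism, and intertwines the fibrewise $\R/\Z$-actions on source and target. Everything ultimately hinges on one small arithmetic step: expressing an arbitrary vector of $\Z^3$ in the non-integer basis $\{w_1, w_2, k\}$ of $\R^3$.

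First I establish a normal form for lattice points. Primitivity of $k$ (guaranteed by the maximality of $h$) yields $\Z^3 \cap \R k = \Z k$. For each generator $w_i$ of $\Lambda$ pick a lift $v_i \in \Z^3$ with $\pi_k(v_i) = w_i$; then $v_i = w_i + t_i k$ for a unique $t_i \in \R$, and the defining condition $w_i - c^i k \in \Z^3$ forces $c^i \equiv -t_i \pmod{1}$. Combined with the splitting of the short exact sequence $0 \to \Z k \to \Z^3 \to \Lambda \to 0$, this yields the key fact: every $v \in \Z^3$ has a unique expansion $v = n_1 w_1 + n_2 w_2 + \zeta k$ with $n_1, n_2 \in \Z$ and $\zeta \in \R$ satisfying the congruence $\zeta + n_1 c^1 + n_2 c^2 \in \Z$.

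Plugging this into (\ref{trivmap}) proves well-definedness: adding $v$ to the representative $\chi_1 w_1 + \chi_2 w_2 + \chi k$ sends $(\chi_1, \chi_2, \chi)$ to $(\chi_1 + n_1, \chi_2 + n_2, \chi + \zeta)$; the first output component is unchanged modulo $\Lambda$, and the second shifts by the integer $c^1 n_1 + c^2 n_2 + \zeta$. For the inverse I would write the explicit formula $([\chi_1 w_1 + \chi_2 w_2], [s]) \mapsto [\chi_1 w_1 + \chi_2 w_2 + (s - c^1 \chi_1 - c^2 \chi_2) k]$, verify its well-definedness by the same calculation, and check that the two composites are the identity. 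For the bundle structure, the fibrewise $\R/\Z$-action $t \cdot [x] = [x + t k]$ on $T^3$ (free and transitive by primitivity of $k$) corresponds under (\ref{trivmap}) to translation in the second factor of $T_\Lambda \times \R/\Z$, so (\ref{trivmap}) is a trivialisation of $\R/\Z$-bundles. The sole delicate point is the normal form step: the correction terms $c^i$ in (\ref{trivmap}) are precisely what is needed to cancel the non-integer $k$-component of a lattice vector written in the $\{w_1, w_2\}$-basis, and after this observation no real obstacle remains.
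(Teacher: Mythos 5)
Your proof is correct, and it is exactly the detailed form of the ``direct calculation'' the paper leaves to the reader: expand a lattice vector $v\in\Z^3$ in the basis $\{w_1,w_2,k\}$, use primitivity of $k$ (so $\Z^3\cap\R k=\Z k$) together with $w_i-c^ik\in\Z^3$ to see that the $k$-coefficient is $\equiv -n_1c^1-n_2c^2 \pmod 1$, and then read off well-definedness, bijectivity, and $\R/\Z$-equivariance.
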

 \begin{proof}
   Direct calculation.
 \end{proof}
We give $T_\Lambda$ the induced metric and orientation and choose a Hermitian line bundle $L$ over it with $c_1(L) = h$ (in the standard identification of $H^2\big(T_\Lambda; \Z\big)$ with $\Z$). Furthermore, we equip the bundle with an arbitrary unitary connection $\nabla^L$. 

\begin{defi}
  We define $K:= \pi_{\overline k}^{-1} (L)$ and $\nabla^K := \pi_{\overline k}^{-1} \big(\nabla^L\big)$. Then we have $c_1(K) = \hat k$.
\end{defi}

\subsubsection{Working on $T_\Lambda$}
\label{sec:worktl}

We now look at the corresponding problem on $T_\Lambda$. For each (positive) Chern class $h$, we have an associated bundle $\underline \HQ \otimes L$ over $T_\Lambda$. Then each closed one-form $\alpha_\Lambda$ over $T_\Lambda$ defines a Dirac operator
\begin{align*}
  \dirac^L_{\alpha_\Lambda} : \Gamma\big(\underline \HQ \otimes L\big) \to \big(\underline \HQ \otimes L\big)
\end{align*}
We give $W$ an arbitrary complex structure and scale everything so that we work on $\C/_{\{1, \tau\}}$ with $\text{im}\, \tau >0$. Now we can equip $L$ with a \emph{holomorphic} structure; we choose it so that  $\nabla^L+\ci \alpha_\Lambda$  becomes the Chern connection of the holomorphic bundle.

This specifies a  problem for twisted Dirac operators on a Riemann surface. We use the results of \cite[section 5.2]{tejeroprieto}, where the eigenspaces of $\dirac^L_{\alpha_{\Lambda}}$ are described in terms of holomorphic sections.

The eigenspaces can be made explicit using theta functions. A detailed discussion of all calculations and identifications can be found in \cite[section 2.c]{thesismeier}. The result is the following:

\begin{lem}\label{lem:tlambda}
  We can explicitely construct a basis of orthogonal eigensections $\sigma_m$, $m\in \Z$, for $\dirac^L_{\alpha_\Lambda}$ with respective eigenvalues
  \begin{align*}
    \mu_m := \sgn m \, \sqrt{2\pi h \|k\|\left\lfloor\frac{|m|}{h}\right\rfloor}.
  \end{align*}
The eigenvalues are independent of $\alpha_{\Lambda}$.
\end{lem}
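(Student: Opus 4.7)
The plan is to diagonalise $\dirac^L_{\alpha_\Lambda}$ by exploiting the K\"ahler structure on $T_\Lambda$ and reducing to the Landau spectrum of a magnetic Laplacian on an elliptic curve.

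First I would invoke the K\"ahler identification of the spinor bundle on a Riemann surface: the canonical bundle of $T_\Lambda$ is trivial, so there is a natural isometry
\begin{equation*}
  \underline\HQ \otimes L \;\cong\; \Omega^{0,0}(L)\oplus\Omega^{0,1}(L).
\end{equation*}
By construction the holomorphic structure on $L$ was chosen so that $\nabla^L+\ci\alpha_\Lambda$ is its Chern connection, and under the above identification $\dirac^L_{\alpha_\Lambda}$ becomes (up to a universal constant) $\overline\partial_L+\overline\partial_L^{*}$. Squaring decouples the two Dolbeault summands: on $\Omega^{0,0}(L)$ we obtain $2\,\overline\partial_L^{*}\overline\partial_L$, on $\Omega^{0,1}(L)$ we obtain $2\,\overline\partial_L\overline\partial_L^{*}$, and the nonzero eigenspaces of the two Laplacians are intertwined by $\overline\partial_L$. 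Hence it suffices to compute the spectrum of $\overline\partial_L^{*}\overline\partial_L$ on $\Omega^{0,0}(L)$ and to treat the kernels on $\Omega^{0,0}$ and $\Omega^{0,1}$ separately.

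Next I would apply \cite[section 5.2]{tejeroprieto}: on the flat torus $T_\Lambda$ of area $1/\|k\|$ with a constant--curvature unitary connection of total flux $h$, the operator $2\,\overline\partial_L^{*}\overline\partial_L$ is a Landau Hamiltonian whose spectrum consists of \emph{Landau levels} indexed by $n=0,1,2,\dots$, each of multiplicity $h$. The ground space $n=0$ is $\ker\overline\partial_L$, an $h$-dimensional space whose explicit theta--function basis is written down in \cite[section 2.c]{thesismeier}; the higher levels are produced by repeatedly applying a normalised raising operator built from $\nabla_{\partial_z}$ to these ground states. For $h>0$ the corresponding Laplacian on $\Omega^{0,1}(L)$ has no kernel (Serre duality), and its nonzero levels are matched to those of $\Omega^{0,0}(L)$ by $\overline\partial_L$. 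Assembling the pieces, every Landau level $n\geq 1$ contributes $h$ eigensections of $\dirac^L_{\alpha_\Lambda}$ with eigenvalue $+\sqrt{2\pi h\|k\|\,n}$ and $h$ with eigenvalue $-\sqrt{2\pi h\|k\|\,n}$, while the zeroth level contributes an $h$-dimensional kernel; enumerating these by an integer $m$ yields exactly $\mu_m=\sgn m\sqrt{2\pi h\|k\|\lfloor|m|/h\rfloor}$. The independence from $\alpha_\Lambda$ is then automatic: different choices of $\alpha_\Lambda$ only move the isomorphism class of $L$ inside $\mathrm{Pic}^h(T_\Lambda)$, while both the degeneracy pattern and the Landau spacing depend only on the degree $h$ and the area $1/\|k\|$.

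The main obstacle, and the step I would invest the most care in, is the bookkeeping of normalisation constants: the precise relation between $\dirac^L_{\alpha_\Lambda}$ and $\sqrt 2(\overline\partial_L+\overline\partial_L^{*})$ in the chosen trivialisations, the passage from the metric on $T_\Lambda$ inherited from $\R^3$ to the complex coordinate on $\C/(\Z+\tau\Z)$, and the commutator $[\nabla_{\partial_z},\nabla_{\partial_{\bar z}}]$ that fixes the Landau spacing. All ingredients are available in \cite[section 5.2]{tejeroprieto} and \cite[section 2.c]{thesismeier}; the work is in verifying that the constants assemble to exactly $2\pi h\|k\|$ and that the resulting eigensections are mutually orthogonal across all Landau levels and across the two chiralities.
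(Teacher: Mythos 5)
Your proposal reconstructs precisely the argument that the paper outsources to \cite[section 5.2]{tejeroprieto} and \cite[section 2.c]{thesismeier}: the paper offers no proof of its own for this lemma beyond a pointer to those references, and what you have written is exactly the content behind those pointers. The K\"ahler identification $\underline\HQ\otimes L\cong \Omega^{0,0}(L)\oplus\Omega^{0,1}(L)$ (valid because $T_\Lambda$ has trivial canonical bundle), the identification $\dirac^L_{\alpha_\Lambda}\sim\sqrt2(\overline\partial_L+\overline\partial_L^*)$ with the Chern connection, the decoupling into two Dolbeault Laplacians, and the Landau-level analysis with raising operators and theta-function ground states are all the right ingredients, and your flux/area bookkeeping (area of $T_\Lambda$ is $1/\|k\|$, total flux $2\pi h$, hence spacing $2\pi h\|k\|$) is correct. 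The Serre-duality observation that $\overline\partial_L\overline\partial_L^*$ has no kernel on $\Omega^{0,1}(L)$ for $h>0$ is also right, and the argument for $\alpha_\Lambda$-independence (the holomorphic structure moves within $\mathrm{Pic}^h$, but spacing and degeneracies see only degree and area) is the standard and correct one.

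One point deserves more care, though it is a feature of the lemma's statement rather than a flaw in your method. Your analysis gives a kernel of dimension exactly $h$, plus, for each $n\geq 1$, $h$ eigensections at $+\sqrt{2\pi h\|k\|\,n}$ and $h$ at $-\sqrt{2\pi h\|k\|\,n}$. But a literal reading of $\mu_m=\sgn m\,\sqrt{2\pi h\|k\|\lfloor|m|/h\rfloor}$ assigns $\mu_m=0$ for all $m\in\{-(h-1),\dots,h-1\}$, that is, $2h-1$ zero eigenvalues. You gloss over this with \emph{enumerating these by an integer $m$ yields exactly $\mu_m$}; in fact the enumeration is not the obvious one, and you would need to consult the indexing conventions of the thesis (e.g.\ whether the $\sigma_m$ with $-(h-1)\leq m\leq -1$ are redundant, or whether negative $m$ really indexes $\lceil|m|/h\rceil$ rather than $\lfloor|m|/h\rfloor$) to make the final matching precise. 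Everything else is sound and matches the route the paper itself relies on.
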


\subsubsection{An eigenbasis for $(\dirac^K_{\alpha})^2$}
\label{sec:specd2}
\begin{rmk}
 By a standard gauging argument, we can reduce the problem of finding spectrum and eigenspaces from closed one-forms to harmonic one-forms. So from now on we assume $\alpha\in H^1(T^3;\R) \cong \R^3$.
\end{rmk}

We now look at the map $s_l \circ \tr$, $l\in \Z$, where $s_l : \R/\Z \to S^1$ is defined to be $t \mapsto \exp(2\pi l t)$ and $\tr$ is the map from (\ref{trivmap}). Its exterior derivative is given by:
\begin{align*}
   d\big(s_l \circ \tr\big) &=  2\pi \ci l \big(s_l \circ \tr\big)\, \big(c^1, \, c^2, \, 1\big).
\intertext{We now want to separate this form into its parallel and orthogonal part with respect to $W$:}
  d\big(s_l \circ \tr\big) &= 2\pi \ci (s_l \circ \tr) \cdot \big(\omega^l_{\shortparallel} + \omega_\perp^l\big),\end{align*} 
In the same way we split $\alpha = \alpha_{\shortparallel} + \alpha_{\perp}$.

We set $\alpha_\Lambda:= \alpha_{\shortparallel} + 2\pi\omega^l_{\shortparallel}$ and use Lemma \ref{lem:tlambda} to determine a basis of sections for $\Gamma(\underline{\HQ} \otimes L)$ which we call $\sigma_m^l$, $m \in \Z$.

The parameter $\omega^l_{\shortparallel}$ becomes necessary for our construction since the bundle $T^3 \to T_\Lambda$ is trivial but its metric differs from the orthogonal product $T_\Lambda \times S^1$. 

\begin{defi}
Define
\begin{align*}
  \hat\sigma_{l,m}(v) &:= (s_l \circ \tr)(v) \cdot \pi_{\overline k}^* \big(\sigma^l_m\big)(v)
\end{align*}
\end{defi}

This can be interpreted as a combination of a basis of the Dirac operator over $S^1$ with bases over $T_\Lambda$. 

\begin{defi}
  Let $\lambda_l:= \big(2\pi l + \langle k, \, \alpha\rangle\big)/\|k\|$, where $\langle\, , \, \rangle$ means the standard scalar product of $\R^3$ (or, interpreted differently, the evaluation of $k\cup \alpha$ at the orientation class). 
\end{defi}

 \begin{thm}[Eigenbasis for $(\dirac^K_{\alpha})^2$]\label{eigend2}
The set $\big\{\hat \sigma_{l,m}\, \big| \, l,m\in \Z \big\}$ forms an orthogonal basis of eigensections for $(\dirac^K_{\alpha})^2$ with the respective eigenvalues $\lambda_l^2 + \mu_m^2$.
\end{thm}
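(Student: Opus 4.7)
The plan is to use the trivialisation \eqref{trivmap} to split $\dirac^K_\alpha$ into a vertical piece along the fibre direction $k/\|k\|$ and a horizontal piece tangent to $W$, then to diagonalise the square by showing that the cross terms cancel through Clifford anticommutation.

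More concretely, I would write $\dirac^K_\alpha = D_\perp + D_{\shortparallel}$ where $D_\perp = c(k/\|k\|)(\nabla^K+\ci\alpha)_{k/\|k\|}$, with $c$ denoting Clifford multiplication, and $D_{\shortparallel}$ collects the two Clifford--derivative summands along an orthonormal basis of $W$. The factor $\pi_{\overline k}^*\sigma^l_m$ is constant along fibres, so $D_\perp$ only hits $s_l\circ\tr$; using the formula for $d(s_l\circ\tr)$, which via the explicit trivialisation has components $(c^1,c^2,1)$ in the basis dual to $(w_1,w_2,k)$, evaluating on $k/\|k\|$ and adding the contribution of $\ci\alpha$ along $k/\|k\|$, a short computation yields $(\nabla^K+\ci\alpha)_{k/\|k\|}\hat\sigma_{l,m} = \ci\lambda_l\,\hat\sigma_{l,m}$, hence $D_\perp\hat\sigma_{l,m} = \ci\lambda_l\,c(k/\|k\|)\,\hat\sigma_{l,m}$.

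For the horizontal piece, the same evaluation shows that differentiating $s_l\circ\tr$ along $W$ produces precisely the extra connection form $2\pi\omega^l_{\shortparallel}$ which, combined with $\alpha_{\shortparallel}$, reconstitutes $\alpha_\Lambda$. Since $\nabla^K = \pi_{\overline k}^{-1}(\nabla^L)$, the horizontal derivative of $\pi_{\overline k}^*\sigma^l_m$ is the pullback of $\nabla^L\sigma^l_m$. Collecting terms gives $D_{\shortparallel}\hat\sigma_{l,m} = (s_l\circ\tr)\,\pi_{\overline k}^*\big(\dirac^L_{\alpha_\Lambda}\sigma^l_m\big) = \mu_m\,\hat\sigma_{l,m}$ by Lemma \ref{lem:tlambda}. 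Expanding $(D_\perp+D_{\shortparallel})^2$ and using $c(k/\|k\|)^2 = -1$ together with the anticommutation of $c(k/\|k\|)$ with the horizontal Clifford generators, the cross term $D_\perp D_{\shortparallel} + D_{\shortparallel} D_\perp$ vanishes on $\hat\sigma_{l,m}$, leaving the eigenvalue $\lambda_l^2 + \mu_m^2$.

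Orthogonality and completeness then follow from a Fubini decomposition of the $L^2$-inner product through the diffeomorphism $T^3 \cong T_\Lambda \times S^1$: the fibre integral gives $\delta_{l,l'}$ by orthogonality of the $s_l$ on $L^2(S^1)$, while the base integral for each fixed $l$ reduces to the orthogonality and completeness of $\{\sigma^l_m\}_m$ provided by Lemma \ref{lem:tlambda}. The main obstacle is the coordinate bookkeeping in the non-orthogonal trivialisation \eqref{trivmap}: one must trace the splitting $\omega^l_{\shortparallel}+\omega^l_\perp$ of $d(s_l\circ\tr)/(2\pi\ci(s_l\circ\tr))$ carefully enough to confirm both that the perpendicular part produces exactly $\lambda_l$, so that $\langle k,\alpha\rangle/\|k\|$ appears with the correct normalisation, and that $\omega^l_{\shortparallel}$ is exactly the shift absorbed into $\alpha_\Lambda$ in the definition preceding the theorem.
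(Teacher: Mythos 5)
Your decomposition $\dirac^K_\alpha = D_\perp + D_\shortparallel$, the computation $(\nabla^K+\ci\alpha)_{k/\|k\|}\hat\sigma_{l,m}=\ci\lambda_l\hat\sigma_{l,m}$ using the $(c^1,c^2,1)$ components of $d(s_l\circ\tr)$, and the recognition that the horizontal differentiation reconstitutes $\alpha_\Lambda=\alpha_\shortparallel+2\pi\omega^l_\shortparallel$ so that Lemma~\ref{lem:tlambda} applies, is exactly the ``apply $\dirac^K_\alpha$ twice and use the definition of $\omega^l$'' of the paper's proof, made explicit; the Clifford anticommutation argument for the vanishing of the cross term is correct (note $D_\shortparallel$ anticommutes with $c(k/\|k\|)$ because the spinor bundle is trivially flat, so $D_\shortparallel D_\perp\hat\sigma_{l,m}=-\mu_m\ci\lambda_l c(k/\|k\|)\hat\sigma_{l,m}$ cancels $D_\perp D_\shortparallel\hat\sigma_{l,m}$).

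Where you genuinely diverge is the orthogonality step. You do a Fubini decomposition through the trivialisation: integrate along the fibre to kill cross terms in $l$ via orthogonality of the characters, then invoke orthogonality of $\{\sigma^l_m\}_m$ on the base for each fixed $l$. This works, but one must be slightly careful that $\pi_{\overline k}$ is a Riemannian submersion with fibres of constant length $\|k\|$ (so the volume form genuinely factors), and that the Fourier coefficient $b_l$ obtained by dividing off $s_l\circ\tr$ is indeed $\chi$-independent and transforms as an $L$-section (it is, because $K=\pi_{\overline k}^{-1}L$ has trivial cocycle in the fibre direction and $s_l\circ\tr$ is a genuine function on $T^3$). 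The paper instead sidesteps all this bookkeeping with a perturbation trick: changing $\alpha_\perp$ moves the $\lambda_l$ while fixing the $\sigma^l_m$, so for generic $\alpha_\perp$ all the eigenvalues $\lambda_l^2+\mu_m^2$ are simple, whence orthogonality is automatic from self-adjointness, and since the sections do not depend on $\alpha_\perp$, orthogonality persists at the special values. Your route is more elementary and self-contained; the paper's is shorter and exploits the independence of the eigensections from $\alpha_\perp$, which is also the key observation used again later in the paper. Both are valid.
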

\begin{proof}
  Applying $\dirac^K_\alpha$ twice and using the definition of $\omega^l$, we see that these sections are indeed eigensections for the given eigenvalues. With a standard calculation (see \cite[p.45]{thesismeier}), we conclude that the set $\spann\big\{\hat \sigma_{l,m}\, \big| \, l,m\in \Z \big\}$ is dense in the space of $L^2$-sections. The orthogonality can be deduced from the orthogonality of the $\sigma_m^l$ by using the fact that a change of $\alpha_\perp$ changes the spectrum but fixes $\sigma^l_m$. 
\end{proof}

\subsubsection{An eigenbasis for $\dirac^K_{\alpha}$}
\label{sec:specd}

Theorem \ref{eigend2} gives a quadratic equation for $\dirac^K_{\alpha}$. Furthermore, we know that the Dirac operator on $T_\Lambda$ is graded, so the bases $\sigma^l_m$ split into $\sigma^{l+}_m + \sigma^{l-}_m$. Together this leads us to the following definition:

\begin{defi}
  Let
  \begin{align*}
    \sigma_{l,m}^{\pm} &:= (s_l \circ \tr) \cdot \bigg(\Big(\lambda_l + \mu_m \pm \sqrt{\lambda_l^2 + \mu_m^2}\Big)\, \pi_{\overline k}^*\big( \sigma^{l+}_m\big)\\ &\quad + \Big(-\lambda_l + \mu_m \pm \sqrt{\lambda_l^2 + \mu_m^2}\Big)\, \pi_{\overline k}^*\big( \sigma^{l-}_m\big)\bigg) \\
\sigma_{l,m}^0 &:= \hat \sigma_{l,m}
  \end{align*}
and
\begin{align*}
  \nu_{l,m}^{\pm} &:= \pm \sqrt{\lambda_l^2 + \mu_m^2} \\
  \nu_{l,m}^0 &:= \left\{\begin{array}{ll} \lambda_l & \text{for $0 \leq m \leq h-1$} \\ \mu_m & \text{otherwise}\end{array}\right.
\end{align*}
\end{defi}

From this set of vectors we have to choose a subset of nonzero vectors whose span is dense. 

\begin{thm}\label{basisd}
  We get an orthogonal eigenbasis of $\dirac_\alpha^K$ by
  \begin{align*}
    \bigg\{ \sigma_{l,m}^\pm &\, \bigg|\, (l,m) \in \Z^2 \quad \text{with $\lambda_l \neq 0$ and $m \geq h$}\bigg\}\\ &\cup 
    \bigg\{ \sigma_{l,m}^0 \, \bigg| \, (l,m) \in \Z^2 \quad \text{with $\lambda_l =0$ or $0\leq m \leq h-1$} \bigg\},
  \end{align*}
which will be written as $M_\alpha^{\pm} \cup M_\alpha^0$. The respective eigenvalues are $\nu^{+/0/-}_{l,m}$.
\end{thm}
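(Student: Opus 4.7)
The plan is to diagonalise $\dirac^K_\alpha$ inside each eigenspace of $(\dirac^K_\alpha)^2$, which Theorem \ref{eigend2} has already identified. For a pair $(l,m)$ with $\lambda_l^2+\mu_m^2>0$ the relevant eigenspace is two-dimensional and, after pulling the $\pm$-grading from $T_\Lambda$ back along $\pi_{\overline k}$, carries the natural basis $\{(s_l\circ \tr)\cdot \pi_{\overline k}^*\sigma^{l+}_m,\; (s_l\circ \tr)\cdot \pi_{\overline k}^*\sigma^{l-}_m\}$. The goal of the main step is then to compute the $2\times 2$ matrix of $\dirac^K_\alpha$ in this basis and read off its eigenvectors.

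For this computation I would exploit the $S^1$-bundle structure given by the trivialisation (\ref{trivmap}). Writing $\dirac^K_\alpha$ in terms of the splitting $\alpha=\alpha_{\shortparallel}+\alpha_{\perp}$ and using the formula for $d(s_l\circ \tr)$ derived before Theorem \ref{eigend2}, the operator decomposes into a vertical piece — Clifford multiplication by $k/\|k\|$ composed with differentiation along $k$, which contributes the scalar $\lambda_l$ on $(s_l\circ \tr)$ — and a horizontal piece which, by the very choice of $\alpha_\Lambda$ and $\nabla^L$, agrees with $\dirac^L_{\alpha_\Lambda}$ and hence swaps $\pm$-grades with factor $\mu_m$ by Lemma \ref{lem:tlambda}. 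Since vertical Clifford multiplication also anticommutes with the grading, both contributions are off-diagonal in the $\pm$-basis. The resulting $2\times 2$ matrix has trace zero and determinant $-(\lambda_l^2+\mu_m^2)$; its eigenvalues are $\pm\sqrt{\lambda_l^2+\mu_m^2}$, and its eigenvectors are precisely the linear combinations entering the definition of $\sigma^\pm_{l,m}$.

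For the degenerate cases the section $\sigma^0_{l,m}=\hat\sigma_{l,m}$ does the job directly: if $\lambda_l=0$ the vertical contribution disappears and $\hat\sigma_{l,m}$ is an eigenvector with eigenvalue $\mu_m$; if $0\leq m\leq h-1$ then $\mu_m=0$ by Lemma \ref{lem:tlambda} and, since the corresponding $\sigma^l_m$ are purely holomorphic, $\sigma^{l-}_m=0$, so the two-dimensional block collapses to a one-dimensional one on which $\dirac^K_\alpha$ acts as $\lambda_l$. Orthogonality inside each block is automatic (the $2\times 2$ matrix is Hermitian), and across blocks it is inherited from the orthogonality of the $\hat\sigma_{l,m}$ established in Theorem \ref{eigend2}.

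The main obstacle is the bookkeeping behind the index set $M^\pm_\alpha\cup M^0_\alpha$. The restriction $m\geq h$ in $M^\pm_\alpha$ is exactly what prevents double-counting: for $m\leq -h$ the sections $\sigma^{l\pm}_m$ span the same two-dimensional $(\dirac^K_\alpha)^2$-eigenspace already resolved by $\sigma^\pm_{l,|m|}$, and analogously in the $\mu_m=0$ sector only the values $0\leq m\leq h-1$ actually contribute. Completeness then follows by matching, eigenspace by eigenspace, the proposed basis against the one provided by Theorem \ref{eigend2}: in every block the number of new basis vectors equals the number of old ones.
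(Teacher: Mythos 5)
Your overall strategy is the same as the paper's: resolve $\dirac^K_\alpha$ inside each $(\dirac^K_\alpha)^2$-eigenspace and verify the explicit formulas for $\sigma^\pm_{l,m}$. But two of the computational claims you make are incorrect, and one of them is exactly the point the paper's proof hinges on.

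First, the claim that ``vertical Clifford multiplication also anticommutes with the grading, both contributions are off-diagonal in the $\pm$-basis'' is wrong. In three dimensions Clifford multiplication by the unit normal $k/\|k\|$ to $W$ \emph{is} (up to a scalar) the chirality operator of the 2-dimensional Dirac bundle over $T_\Lambda$; it \emph{commutes} with the $\pm$-grading and acts \emph{diagonally}. The resulting $2\times 2$ block in the basis $\{\pi_{\overline k}^*\sigma^{l+}_m,\pi_{\overline k}^*\sigma^{l-}_m\}$ is $\left(\begin{smallmatrix}\lambda_l & \mu_m \\ \mu_m & -\lambda_l\end{smallmatrix}\right)$, not a purely off-diagonal matrix. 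A purely off-diagonal Hermitian $2\times 2$ matrix with eigenvalues $\pm\sqrt{\lambda_l^2+\mu_m^2}$ does not have the eigenvectors $(\lambda_l+\mu_m\pm\sqrt{\lambda_l^2+\mu_m^2},\;-\lambda_l+\mu_m\pm\sqrt{\lambda_l^2+\mu_m^2})$ that appear in the definition of $\sigma^\pm_{l,m}$, so your proposed matrix would fail to match the theorem's formulas; the correct matrix does.

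Second, the bookkeeping step that prevents double-counting uses the wrong index relation. You assert that $\sigma^{l\pm}_m$ and $\sigma^{l\pm}_{-m}$ span the same $(\dirac^K_\alpha)^2$-eigenspace, i.e.\ you pair $m$ with $-m$. The paper's proof instead invokes the relation from \cite{tejeroprieto}: $\sigma^l_m = \sigma^{l+}_m + \sigma^{l-}_m$ forces $\sigma^l_{h-m-1} = \sigma^{l+}_m - \sigma^{l-}_m$, so the $\dirac^K_\alpha$-invariant subspace is $\spann\{\hat\sigma_{l,m},\,\hat\sigma_{l,h-m-1}\}$. The pairing is $m \leftrightarrow h-m-1$, not $m\leftrightarrow -m$; these agree only when $h=1$. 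Because this relation is what justifies the index set $M^\pm_\alpha\cup M^0_\alpha$ (and hence both density and orthogonality), your argument does not actually establish the theorem for $h>1$. The explicit computation of the $2\times 2$ matrix is not in the paper's proof, which simply uses the $\tr$-decomposition and the Tejero Prieto pairing to identify the invariant $2$-planes; supplying the matrix is a nice addition, but it has to be the correct matrix and the correct pairing.
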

\begin{proof}
  We check that all these vectors are nonzero and belong to the defined eigenspaces. 

From the construction in \cite{tejeroprieto} we know that $\sigma^l_m = \sigma^{l+}_m + \sigma^{l-}_m$ implies $\sigma^l_{h-m-1} = \sigma^{l+}_m - \sigma^{l-}_m$. 

Therefore, we have the $\dirac_\alpha^K$-invariant subspaces
\begin{align*}
  \spann\big\{\hat \sigma_{l,m},\,\dirac_\alpha^K\, \hat \sigma_{l,m}\big\} = \spann\big\{\hat \sigma_{l,m},\, \hat \sigma_{l,h-m-1}\big\}
\end{align*}
They can be used to prove the orthogonality and density of the constructed sections.
\end{proof}

\subsection{Trivial $\Spinc$ structure}
\label{sec:spectriv}

We look at $\dirac_\alpha$ on $\Gamma(\underline \HQ)=\Gamma(\underline \C^2)$ for the standard connection $\nabla^K$. 

Let \begin{align*}
  \sigma_b(x_1,x_2,x_3):= \exp \big(2\pi \ci (b_1x_1+ b_2x_2 + b_3x_3)\big)\\
\end{align*}
Then we get the basis of sections:
\begin{align*}
  \spann \big\{ \sigma^+_b = (\sigma_b, 0) \, \big|\, b \in \Z^3 \big\} \cup \big\{ \sigma^-_b = (0,\sigma_b) \, \big|\, b \in \Z^3 \big\}
\end{align*}
Define $\beta= \alpha + 2\pi b$.

We use the classical methods of \cite{friedrich} to determine:

\begin{thm}\label{trivialeigend}
  We get an orthogonal eigenbasis for $\dirac_\alpha$ as
  \begin{align*}
    \bigg\{\|\beta\|\sigma_b^+ - \dirac_\alpha \sigma^+_b\, &\bigg| \, b \in \Z^3 \text{ with $\beta_2 \neq 0$ or $\beta_3 \neq 0$}\bigg\} \\
\cup \,\, \bigg\{\|\beta\|\sigma_b^+ + \dirac_\alpha \sigma^+_b\, &\bigg| \, b \in \Z^3 \text{ with $\beta_2 \neq 0$ or $\beta_3 \neq 0$}\bigg\} \\
\cup \,\, \bigg\{ \sigma^\pm_b\, &\bigg|\, \beta_2 = \beta_3 = 0 \bigg\}.
  \end{align*}
  Furthermore, we have for $\beta_2 \neq 0$ or $\beta_3 \neq 0$:
  \begin{align*}
    \spann\big\{\sigma_b^+,\, \sigma_b^-\big\} = \spann\big\{ \|\beta\| \sigma^+_b - \dirac_\alpha \sigma^+_b, \,  \|\beta\| \sigma^+_b + \dirac_\alpha \sigma^+_b \big\}.
  \end{align*}
The spectrum consists of all numbers $\pm \|\beta(b, \alpha)\|$ for $b\in \Z^3$. 
\end{thm}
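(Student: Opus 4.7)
The plan is to exploit Fourier analysis: the Dirac operator on the flat torus, written in the trivialisation $\underline{\HQ}\cong\underline{\C}^2$, has constant coefficients, so it preserves each two-dimensional subspace $V_b:=\spann\{\sigma_b^+,\sigma_b^-\}$. First I would recall the explicit form of $\dirac_\alpha$ from \cite{friedrich}: identifying Clifford multiplication with a triple of Pauli matrices, $\dirac_\alpha=\sum_j e_j\cdot(\partial_j+\ci\alpha_j)$. Since $\partial_j\sigma_b=2\pi\ci b_j\sigma_b$, applying $\dirac_\alpha$ to $\sigma_b^\pm$ introduces a factor $\ci\beta_j$ on each summand, and the restriction $\dirac_\alpha|_{V_b}$ is represented by a Hermitian $2\times 2$ matrix whose entries are $\R$-linear in the components of $\beta$.

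Next I would verify by direct matrix computation (or equivalently by the Lichnerowicz formula, noting that the manifold is flat and the connection has constant curvature) that $(\dirac_\alpha|_{V_b})^2=\|\beta\|^2\cdot\id_{V_b}$, so the only possible eigenvalues on $V_b$ are $\pm\|\beta\|$. The standard projector trick then gives eigenvectors $\|\beta\|v\pm\dirac_\alpha v$ for any nonzero $v\in V_b$ provided these expressions do not vanish. Taking $v=\sigma_b^+$, inspection of the explicit matrix shows that the $\sigma_b^-$-coefficient of $\dirac_\alpha\sigma_b^+$ is, in the Pauli conventions consistent with the paper's setup, a linear combination of $\beta_2$ and $\beta_3$ that vanishes precisely when $\beta_2=\beta_3=0$. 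Hence in the generic case the pair $\|\beta\|\sigma_b^+\pm\dirac_\alpha\sigma_b^+$ consists of two nonzero eigenvectors, while in the degenerate case $V_b$ is already diagonalised by $\sigma_b^\pm$ themselves, with eigenvalues $\pm\|\beta\|=\pm|\beta_1|$.

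Orthogonality and completeness then reduce to routine checks. Sections with different Fourier indices are $L^2$-orthogonal via the orthogonality of the exponentials $\sigma_b$; within a single $V_b$, the two constructed eigenvectors correspond to distinct eigenvalues of the symmetric operator $\dirac_\alpha$ and are therefore orthogonal, and they are linearly independent and hence span $V_b$, which yields the claimed $\spann$-identity. Density of the full system in $L^2\big(\underline{\HQ}\big)$ is the Fourier theorem on $T^3$ applied componentwise through the trivialisation, and reading off the eigenvalues produces the stated spectrum $\bigl\{\pm\|\beta(b,\alpha)\|\sep b\in\Z^3\bigr\}$.

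The main obstacle is the bookkeeping in the degenerate case $\beta_2=\beta_3=0$: one has to check explicitly that the generic recipe produces zero or linearly dependent vectors there, and that the replacement sections $\sigma_b^\pm$ really are eigensections of $\dirac_\alpha$ with the correct eigenvalues. This is a short matrix computation in the $2\times 2$ representation of $\dirac_\alpha|_{V_b}$, once the precise correspondence between the Clifford generators $e_j$ and the Pauli matrices has been fixed; there is no conceptual difficulty, just the need to keep the sign and index conventions straight throughout.
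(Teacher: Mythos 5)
Your approach is the same one the paper invokes (it simply cites the classical computation of Friedrich and omits the argument entirely): Fourier decomposition into the $\dirac_\alpha$-invariant two-dimensional subspaces $V_b$, a $2\times 2$ Clifford matrix representing $\dirac_\alpha|_{V_b}$ squaring to $\|\beta\|^2\id$, and the projector trick $\|\beta\|\sigma_b^+\pm\dirac_\alpha\sigma_b^+$ with the degenerate case handled separately. Your identification of the degenerate locus $\beta_2=\beta_3=0$ is indeed a convention-dependent bookkeeping step (it reflects the paper's choice of $e_1$ as the generator of the complex structure on $\HQ$, so that Clifford multiplication by $e_1$ is the diagonal Pauli matrix while $e_2,e_3$ are off-diagonal), and you flag it appropriately; the rest of the argument is correct.
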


\begin{rmk}
  In the case $\hat k \neq 0$ the spectrum is determined by $\alpha_\perp$ while the eigenbasis is determined by $\alpha_{\shortparallel}$. Here every change of $\alpha$ has influence on both eigenbasis and spectrum. 
\end{rmk}

\section{Spectral sections}
\label{sec:specsec}

We look at families of Dirac operators over a compact base space $B$. \cite{melrosepiazza} defined the concept of a \emph{spectral section} for a constant $R>0$. The most interesting spectral sections are those for small $R$; they should be classified in the sense of the following definition.

\begin{defi}\label{infspecsec}
  Let $R_{\text{inf}}$ be defined as the infimum of the set 
\begin{align*} \{R>0\, |\, \text{for $R$ exists at least one spectral section}\}. \end{align*}
Furthermore, choose a (small) positive number $\eps_P$. Then a \emph{system of infinitesimal spectral sections}\index{system of infinitesimal spectral sections} is a map
  \begin{align*}
    \begin{array}{rcl}
      \big]R_{\text{inf}}, R_{\text{inf}}+\eps_P\big] \times I & \rightarrow & \big\{\text{spectral sections for a fixed operator $D$}\big\} \\[1ex]
      (R,i) & \mapsto & P^i_R,
    \end{array}
  \end{align*}
where
\begin{enumerate}
\item $I$ is an arbitrary index set,
\item $P^i_R$ is a spectral section for the constant map $R$,
\item every $\big(P^i_R\big)_\alpha $, $\alpha \in B$, depends continuously on $R$ (where we consider $\big(P^i_R\big)_\alpha $ as operator between L${}^2$ spaces), and
\item $\cup_{i\in I} \{P^i_R\}$ is a representation system for all spectral sections for $R$, i.e. for all possible spectral sections $P_R$ there is a $P^i_R$ with $i\in I$, so that $\im P_R - \im P^i_R$ is zero in $K$-theory.
\end{enumerate}

A \emph{minimal system of infinitesimal spectral sections} is one in which $I$ is chosen minimal (under the inclusion relation).
\end{defi}

\subsection{Definition of the family}
\label{sec:deffam}

Let $\ell\subset H^1\big(T^3;\Z\big)$ be a lattice (of non-maximal dimension) and let $B:= (\ell \otimes \R)/\ell$. 

We need the following ingredients for our definition:
\begin{itemize}
\item  $\ker(d)_{l\otimes \R}$:  The subset of $\ker(d)$ representing elements in $\ell \otimes \R$.
\item  ${\cal G}_\ell$: The subgroup of the gauge group $\text{Map}\big(T^3, S^1\big)$ determined by $\ell$.
\item The projection  $\pr_{T^3}: T^3 \times \big( \nabla^K + \ci \ker(d)_{l\otimes \R}\big) \rightarrow T^3$ together with the induced vector bundle $\pr_{T^3}^*\big(\underline{\HQ} \otimes K\big)$.
\end{itemize}

If $v$ is an element of the fibre of $\pr_{T^3}^*\big(\underline{\HQ} \otimes K\big)$ over
\begin{align*}
   (y, \nabla^K + \ci \alpha^c) &\in  T^3\times \big(\nabla^K + \ci\ker(d)_{\ell \otimes \R}\big), 
\end{align*}
we can define the following action of $\mathcal{G}_\ell$:
 \begin{align}
   \mathcal{G}_\ell \times \pr_{T^3}^*\big(\underline{\HQ} \otimes K\big) &\to \pr_{T^3}^*\big(\underline{\HQ} \otimes K\big)\nonumber \\
   \label{bllaction}\bigg(u\, , \, \big(v,y, \nabla^K + \ci \alpha\big)\bigg) & \mapsto \big(u(y)\cdot v, y, \nabla^K + \ci \alpha + udu^{-1}\big),
 \end{align}
The quotient is a bundle over $T^3 \times B$. The connection from the parameter space determines a family of Dirac operators called $\dirac$.

Depending on $\hat k$ and $\ell$ we want to know:
\begin{enumerate}
\item Do spectral sections exist?
\item If they exist: What do they look like?
\end{enumerate}

\subsection{Existence of spectral sections}
\label{sec:exisspec}
Following \cite{melrosepiazza} we know that spectral sections for $\dirac$ exist if and only if the index of $\dirac$ in $K^1(B)$ vanishes. Let ${\cal I}$ be the following composition of isomorphisms (remember that $B$ is a torus of maximal dimension 2):
\begin{align*}
  K^1(B) \stackrel{\text{Chern}}{\longrightarrow} H^1(B;\Z) \longrightarrow \big(H_1(B;\Z)\big)^* \longrightarrow \ell^*
\end{align*}

\begin{lem}\label{lem:flow}
  Let $a\in H^1\big(T^3; \Z\big)$ and let $f: (\R \cdot a) /a \to B$ be the map induced by the inclusion. In this way we get a pullback family $\dirac^a$ over $(\R \cdot a)/a$. Then the spectral flow of $\dirac^a$ in positive direction is given by \begin{align*}\langle \hat k,\, a \rangle = \Big\langle \hat k\, \cup\, a, \, \big[T^3\big]\Big\rangle\end{align*}
    

\end{lem}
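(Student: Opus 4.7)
The plan is to compute the spectral flow directly from the explicit eigenvalue formulas in Theorems \ref{basisd} and \ref{trivialeigend}, treating the nontrivial and trivial $\Spinc$ cases separately. I parametrise the loop by $t\in[0,1]$ so that $\alpha(t)$ varies linearly and $\alpha(1)$ is gauge equivalent to $\alpha(0)$ via the transformation determined by $a$; this introduces a factor of $2\pi$ coming from the action $\alpha\mapsto\alpha+u\,du^{-1}$ used in (\ref{bllaction}).

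In the nontrivial case $\hat k=hk\neq 0$, the quantity $\lambda_l(t)=(2\pi l+\langle k,\alpha(t)\rangle)/\|k\|$ is linear in $t$ with $\lambda_l(1)=\lambda_{l+\langle k,a\rangle}(0)$, while $\mu_m$ is $t$-independent. Inspection of Theorem \ref{basisd} shows that the eigenvalues $\pm\sqrt{\lambda_l^2+\mu_m^2}$ for $m\geq h$ stay bounded away from zero (since $|\mu_m|>0$ whenever $m\geq h$), and the eigenvalues $\mu_m$ that appear as $\nu^0_{l,m}$ at instants $\lambda_l=0$ are likewise nonzero. The only eigenvalues that can cross zero are therefore the $\nu^0_{l,m}=\lambda_l$ with $0\leq m\leq h-1$, contributing with multiplicity $h$ for each $l$. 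Summing the signed zero crossings of $\lambda_l$ over $l\in\Z$ yields the net index shift $\langle k,a\rangle$, so that the total spectral flow equals $h\langle k,a\rangle=\langle\hat k,a\rangle$.

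For the trivial $\Spinc$ structure ($\hat k=0$), Theorem \ref{trivialeigend} shows that the spectrum consists of symmetric pairs $\pm\|\beta\|$ on the two-dimensional blocks $\spann\{\sigma^+_b,\sigma^-_b\}$. Each such pair only touches zero when $\beta=0$; a short perturbation argument confirms that the positive branch stays nonnegative and the negative branch stays nonpositive, so every block contributes zero to the spectral flow, in agreement with $\langle 0,a\rangle=0$. The second equality $\langle\hat k,a\rangle=\langle\hat k\cup a,[T^3]\rangle$ is then immediate from the identifications fixed in Section \ref{sec:defini}: under the Hodge-star identification of $H^2(T^3;\Z)$ and the $dx_i$-basis identification of $H^1(T^3;\Z)$, the cup product pairing $H^2\times H^1\to\Z$ becomes the standard Euclidean inner product on $\Z^3$.

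I expect the main obstacle to be the bookkeeping of normalisations: the factor of $2\pi$ in the gauge action must be tracked carefully so that $a\in H^1(T^3;\Z)$ corresponds to exactly one full revolution of the loop, and the rearrangement of the eigenbasis at the instants where $\lambda_l$ vanishes (where the $\sigma^\pm_{l,m}$ are replaced by $\sigma^0_{l,m}$ in Theorem \ref{basisd}) must be shown to be compatible with the continuous eigenvalue tracking that the definition of spectral flow requires.
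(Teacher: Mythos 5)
Your proof is correct and follows essentially the same route as the paper: in the case $\hat k\neq 0$ you observe that only the eigenvalues $\nu^0_{l,m}=\lambda_l$ for $0\leq m\leq h-1$ can cross zero and count crossings (with multiplicity $h$) as $\alpha$ runs around the loop, and in the case $\hat k=0$ you invoke the symmetry of the spectrum to conclude the flow vanishes. You simply spell out a few steps the paper leaves implicit (the gauge normalisation, why the other eigenvalue branches avoid zero, and the translation of the pairing via the fixed identifications).
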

\begin{proof}
  We use our explicit knowledge of the spectrum.

First we assume $\hat k \neq 0$:  From all eigenvalues $\nu^{+/0/-}_{l,m}$ only those of the form $\nu^0_{l,m}$ for $0\leq m \leq h-1$ have a chance to cross zero. From the definition we know that  $\nu^0_{l,m}=\lambda_l=\big(2\pi l + \langle k, \, \alpha\rangle\big)/\|k\|$ for which we can count the crossings while running around the circle.

For $\hat k = 0$ the spectrum is always symmetric with respect to zero. We see that every spectral flow has to vanish.
\end{proof}
With this Lemma we get a direct access to the following statement: 
\begin{thm}
  The isomorphism ${\cal I}$ maps the index of $\dirac$ to the map $x\mapsto \Big\langle \hat k\, \cup\, x, \, \big[T^3\big]\Big\rangle$ in $\ell^*$.
\end{thm}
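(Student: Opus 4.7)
The plan is to reduce the statement to Lemma \ref{lem:flow} via naturality. Since $\mathcal{I}$ lands in $\ell^* \cong H_1(B;\Z)^*$, two elements of $K^1(B)$ coincide if and only if their $\mathcal{I}$-images agree on every $a \in \ell$. So it suffices to compute $\mathcal{I}(\mathrm{ind}\,\dirac)(a)$ for each $a$, and to show that this integer is the spectral flow of the family $\dirac^a$ around $(\R\cdot a)/a$.

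First I would make the naturality step precise. The inclusion $\R\cdot a \hookrightarrow \ell\otimes\R$ descends to a smooth loop $f:(\R\cdot a)/a \to B$, and by construction of the family in Section~\ref{sec:deffam} the pullback $f^*\dirac$ is exactly the family $\dirac^a$ of Lemma \ref{lem:flow}. Hence $f^*(\mathrm{ind}\,\dirac) = \mathrm{ind}\,\dirac^a$ in $K^1(S^1)$.

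Next I would unpack the isomorphism $\mathcal{I}$ to recognise it as ``pull back to a circle and read off the integer''. Since $B$ has real dimension at most $2$, the Chern character $K^1(B)\to H^{\mathrm{odd}}(B;\Q)$ lands in $H^1(B;\Z)$ and is an isomorphism onto it (no contributions from $H^{\geq 3}$, no torsion). Under the canonical identifications $H_1(B;\Z)\cong \pi_1(B)\cong \ell$, the Kronecker pairing becomes evaluation on circles, and pullback along $f$ together with the canonical generator of $K^1(S^1)\cong\Z$ represent this evaluation by $\mathrm{ind}\,f^* = f^*\mathrm{ind}$. It is standard (and the content of how \cite{melrosepiazza} identifies the index) that under $K^1(S^1)\cong\Z$ the class of a family is sent to its spectral flow in the positive direction. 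Therefore
\begin{align*}
  \mathcal{I}(\mathrm{ind}\,\dirac)(a) \;=\; \mathrm{sf}\big(\dirac^a\big).
\end{align*}
Invoking Lemma \ref{lem:flow} for the right hand side gives $\langle \hat k, a\rangle = \langle \hat k\cup a,[T^3]\rangle$, which is the claim.

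The main obstacle is the bookkeeping inside the definition of $\mathcal{I}$: one has to make sure that the composition of Chern character, universal coefficients and the identification $H_1(B;\Z)\cong \ell$ is compatible, with the correct signs and orientation, with the spectral-flow description of $K^1(S^1)$. These are routine but worth stating explicitly (or citing from \cite{melrosepiazza}); once they are fixed the argument collapses to pullback-naturality of the family index plus Lemma \ref{lem:flow}.
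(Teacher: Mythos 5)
Your proof is correct and follows essentially the same route as the paper: restrict along circles spanning $B$, identify the class in $K^1(S^1)\cong\Z$ with the spectral flow, invoke Lemma~\ref{lem:flow}, and conclude by linearity. You have simply made explicit the naturality and Chern-character bookkeeping that the paper dismisses as ``an easy exercise.''
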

\begin{proof}
  Take a fundamental basis $a_1, a_2$ of the torus $B$; then an element in $K^1(B)$ is determined by its images in $K^1\big((\R \cdot a_i)/a_i\big)$, which we calculate with the formula from the preceding lemma. Since the maps are linear, it is enough to check the theorem for $a_1, a_2$ which is an easy exercise.
\end{proof}
\begin{cor}
  Spectral sections for $\dirac$ exist if and only if $k \cup \ell = 0$.
\end{cor}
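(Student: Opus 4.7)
The plan is to reduce the statement directly to the preceding theorem together with the Melrose--Piazza existence criterion, so the ``proof'' will really just be a chain of equivalences.

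First I would invoke the result from \cite[section 2]{melrosepiazza} already cited in the introduction to Section \ref{sec:exisspec}: spectral sections for the family $\dirac$ exist if and only if the index of $\dirac$ in $K^1(B)$ vanishes. So the task is to translate this vanishing into a condition on $k$ and $\ell$.

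Next I would apply the preceding theorem, which identifies, via the isomorphism $\mathcal{I}$, the index of $\dirac$ in $K^1(B)$ with the element of $\ell^*$ given by the linear functional $x \mapsto \langle \hat k \cup x,\, [T^3]\rangle$. Since $\mathcal{I}$ is an isomorphism, the index vanishes precisely when this functional is identically zero on $\ell$, i.e.\ when $\hat k \cup x = 0$ in $H^3(T^3;\Z)\cong \Z$ for every $x \in \ell$. This is exactly the condition $\hat k \cup \ell = 0$.

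Finally I would account for the mildly sloppy notation in the statement: by the decomposition $\hat k = h\cdot k$ from Section \ref{sec:specnon} with $h\in \Z^+$ maximal, the cup product $\hat k \cup x$ vanishes in the torsion-free group $H^3(T^3;\Z)$ if and only if $k \cup x$ does, so $\hat k \cup \ell = 0$ is equivalent to $k \cup \ell = 0$. In the trivial case $\hat k = 0$ both conditions are vacuous and spectral sections exist (consistent with the $\hat k = 0$ branch of Lemma \ref{lem:flow}, where all spectral flows vanish). There is no real obstacle here: once the preceding theorem is in hand, the corollary is a one-line translation, and the only thing to be careful about is that $k$ versus $\hat k$ makes no difference for the cup-product condition.
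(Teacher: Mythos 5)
Your proof is correct and is precisely the intended one‑line translation: the paper itself leaves the corollary unproved, since it follows immediately from the preceding theorem via the Melrose--Piazza criterion (index vanishes iff spectral sections exist) and the fact that $\mathcal{I}$ is an isomorphism. Your extra remark that $\hat k \cup \ell = 0$ and $k \cup \ell = 0$ are equivalent because $H^3(T^3;\Z)$ is torsion‑free and $h>0$ is a sensible clarification of the paper's slightly loose switch between $\hat k$ and $k$.
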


\subsection{Construction of spectral sections for $\hat k \neq 0$}
\label{sec:specknon}

\begin{thm}
  If spectral sections exist, the spectrum is constant.
\end{thm}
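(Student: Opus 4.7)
The plan is to combine the explicit spectral description from Theorems \ref{eigend2} and \ref{basisd} with the existence criterion from the preceding corollary. Recall that in the nontrivial $\Spinc$ case the eigenvalues are of the form $\nu_{l,m}^\pm = \pm\sqrt{\lambda_l^2+\mu_m^2}$ and $\nu_{l,m}^0 \in \{\lambda_l,\mu_m\}$, so the spectrum depends on $\alpha$ only through the quantities $\lambda_l=(2\pi l+\langle k,\alpha\rangle)/\|k\|$ and $\mu_m$. By Lemma \ref{lem:tlambda} the $\mu_m$ are already independent of $\alpha$, so the claim reduces to showing that $\langle k,\alpha\rangle$ is constant on the parameter space $B=(\ell\otimes\R)/\ell$.

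First I would invoke the corollary of the previous subsection: the assumption that spectral sections exist forces $\hat k\cup\ell=0$. Under the Hodge-star identification $H^2(T^3;\Z)\cong H^1(T^3;\Z)\cong \Z^3$ this is exactly the statement $\langle\hat k,x\rangle=0$ for every $x\in\ell$, and since $\hat k=h\cdot k$ with $h>0$ it is equivalent to $\langle k,x\rangle=0$ for all $x\in\ell$. By linearity this pairing then vanishes on all of $\ell\otimes\R$, so for every representative $\alpha\in \ell\otimes\R$ of a point in $B$ we have $\langle k,\alpha\rangle=0$.

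Consequently $\lambda_l=2\pi l/\|k\|$ for every $l\in\Z$, independently of the parameter $\alpha$. Combined with the $\alpha$-independence of $\mu_m$, this shows that each eigenvalue $\nu_{l,m}^{+/0/-}$ is a constant function on $B$, and hence the spectrum of $\dirac$ is constant as $\alpha$ varies.

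The argument is essentially a direct read-off from the explicit spectrum; the only subtlety is to match the cohomological vanishing condition $\hat k\cup\ell=0$ with the vanishing of the scalar product $\langle k,\alpha\rangle$ on $\ell\otimes\R$, which is where the Hodge-star identification fixed in Section \ref{sec:defini} has to be used carefully. No analytic input beyond Lemma \ref{lem:tlambda} and Theorem \ref{basisd} is required.
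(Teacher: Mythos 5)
Your proof is correct and follows essentially the same route as the paper's: both use the corollary that existence of spectral sections forces $k\cup\ell=0$, translate this into the vanishing of the component of $\alpha$ along $k$ (equivalently $\langle k,\alpha\rangle=0$, equivalently $\alpha_\perp=0$), and then read off from the explicit eigenvalue formulas that the spectrum is independent of $\alpha$. The paper compresses the second step by simply citing the earlier remark that in the $\hat k\neq 0$ case the spectrum is determined by $\alpha_\perp$; you instead trace through $\lambda_l=(2\pi l+\langle k,\alpha\rangle)/\|k\|$ and the $\alpha$-independence of $\mu_m$ explicitly, which is a slightly more self-contained presentation of the same argument.
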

\begin{proof}
From $k \cup \ell =0$ we know that for every $\alpha \in (\ell \otimes \R)$ we have $\alpha_\perp = 0$. From section \ref{sec:specd} we know that this implies a constant spectrum.
\end{proof}

Therefore, we have $R_{\text{inf}}=0$. For $\eps_P$ smaller than the smallest eigenvalue of $\dirac$, the spectral sections are fixed everywhere except for the $h$-dimensional kernel of $\dirac$.

Let $ I := \big\{F\, \big|\, F\, \text{subbundle of $B\times \C^h$}\big\}/_{\cong}\,\cong \Z^{h-1} \cup \{0\} \cup \{\C^k\}$  and define $P_F|_{\ker \dirac}$ for $R<\eps_P$ as the orthogonal projection onto $F$. This defines a system of infinitesimal spectral sections which is obviously also minimal.

\subsection{Construction of spectral sections for $\hat k = 0$}
\label{sec:speckzer}

We split $\Gamma_{L^2}(\HQ)$ into the 2-dimensional $\dirac_\alpha$-invariant subspaces $\Sigma_b = \spann\{\sigma_b^+,\, \sigma_b^-\}$. On each of them, we have the two eigenvalues $\pm \|\beta\|= \pm \|\alpha+2\pi b\|$. For small $R$ we know that for each $\alpha$ there is at most one $b$ with $\|\beta\|\leq R$. So for any spectral section $P$  for $\dirac$ with small $R$ we know that it fixes all $\Sigma_b$. Since $P_\alpha|\Sigma_b: \Sigma_b \to \Sigma_b$ is a one-dimensional orthogonal projection for $\|\beta\|>R$, it has to be a one-dimensional orthogonal projection for all $\beta$ (and, therefore, for all $\alpha$, since $\alpha$ and $\beta$ are in bijective correspondence).

We now assume that $\ell$ is a plane since $\dim \ell \leq 1$ does not lead to interesting conclusions. In addition to the assumptions about $R$ above we  assume that $\eps_P$ is smaller than the minimal distance between $\ell \otimes \R$ and any point $b \in \Z^3 \backslash \ell$. This implies that for such $b$ there are no eigenvalues with $\|\beta\|<R$ on $\Sigma_b$.

The space of one-dimensional orthogonal projections on $\C^2$ equals $\C\Peins \cong S^2$. Fix an element $b \in \ell_\Z = (\ell \otimes \R) \cap \Z^3$ and look at the corresponding map $P_\beta|_{\Sigma_b}: \ell \otimes \R \to \C\Peins$ (written as function of $\beta$). For $\|\beta\|\geq R$ every ray coming from zero will be mapped to one point, producing a circle in $\C\Peins$ (this follows from the construction of the eigenbasis).

For $\|\beta\|<R$ we have to continue this map in some way; topologically, the problem is as follows: We have to construct a map from the 2-disc to the 2-sphere which maps the boundary pointwise to the equator. Up to homotopy, there are $\pi_2(S^2) \cong \Z$ many choices for that.

\subsubsection{A system of infinitesimal spectral sections}
\label{infspec}

The preceding discussion leads to the following:

Since we had imposed no lower bounds for $R$, we have $R_{\text{inf}}=0$. Let $\eps_P$ be so small that if fulfills all conditions mentioned above.

We take ${I} = \Big\{ g:\ell_\Z/\ell \to \pi_2\big(\C\Peins\big)\Big\}$ and define for each $R<\eps_P$ spectral projections $P^g$. For $b\not\in \ell_\Z$ these maps are already defined on $\Sigma_b$. For $b\in \ell_Z$, we define $P^g_\alpha$ on $\Sigma_b$ to be a continuation specified by $g(b)\in \pi_2\big(\C\Peins\big)$ as discussed in the preceding subsection (These continuations can be chosen to depend continuously on the parameters).

Conditions 1 and 2 (from the definition of infinitesimal spectral sections) are clear, 3 can be checked directly (if we specify the continuations explicitly), and 4 follows from the discussion above.

In general this system is not minimal. We can choose a minimal system $J$ by fixing an element $g_0 \in I$ and a point $l_0 \in \ell_{\Z}/\ell$ and defining
\begin{align*}
  J &= \big\{ g \in I\, \big|\, g(l) = g_0(l) \quad \text{for $l\neq l_0$}\big\}.
\end{align*}
This is true because $J$ represents all element of the form $(0,z)$ from $K(B) \cong H^0(B;\Z) \oplus H^2(B;\Z) \cong \Z\oplus \Z$.



\section{Acknowledgements}
\label{sec:ack}

This article grew out of my dissertation \cite{thesismeier}. I would like to thank my supervisor Prof. Stefan Bauer for his support. Furthermore, I thank Johannes Ebert for helpful suggestions.





\bibliographystyle{plain}
\bibliography{hitchin}







\end{document}